\newtheorem{coro}{Corollary}[section]
\newtheorem{prop}{Proposition}[section]
\newtheorem{defn}{Definition}[section]
\newtheorem{exple}{Example}[section]
\newtheorem{rem}{Remark}[section]
\newenvironment{proof}[1][Proof]{\textbf{#1.}}{\hfill $\Box$}
\begin{document}

\date{}
\title{\LARGE{A note on $\alpha$-IDT processes}\footnote{This work is supported by Hassan II Academy of Sciences and Technologies}}
\author{
\small{Antoine HAKASSOU}\footnote{\textbf{a.hakassou@edu.uca.ma}, Cadi Ayyad University, LIBMA Laboratory, Department of Mathematics,
Faculty of Sciences Semlalia P.B.O. 2390 Marrakesh, Morocco.}
\, and 
\small{Youssef OUKNINE}\footnote{\textbf{ouknine@uca.ma}, Cadi Ayyad University of Marrakesh and Hassan II Academy of Sciences and Technologies Rabat.}
}

\maketitle

\begin{center}
\textbf{Abstract}
\end{center}

In this note, we introduce the notion of $\alpha$-IDT processes which is obtained from a slight and fundamental modification of the IDT property. 
Several examples of $\alpha$-IDT processes are given and Gaussian processes which are $\alpha$-IDT are characterized. A kind example of this Gaussian 
$\alpha$-IDT is the standard fractional Brownian motion. Also, we invest some links between the $\alpha$-IDT property, with
selfdecomposability, temporal selfdecomposability, stability and self similarity.\\

{\bf Keywords }: IDT processes, Additive processes, Fractional Brownian motion, Gaussian processes, Selfsimilarity, Stability, Selfdecomposability.

\section{Introduction}

This introductory section is devoted to give the definition of $\alpha$-IDT processes and to exhibit numerous examples of such processes. 
This notion of $\alpha$-IDT is obtained from a slight and fundametal modification of the IDT property introduced by R. Mansuy \cite{Mansuy}. 
This class of processes turn out to be also rich than the class of IDT processes since we shall prove that fractional brownian motion with 
any Hurst index $H\in(0;1)$, are $\alpha$-IDT. This is not true in general for IDT processes.

\begin{defn}
 
 Let $\alpha>0$ and $X=(X_{t};t\geq 0)$ a stochastic process. X is said to be an $\alpha$-IDT process if, and only if, 
 for any $n\in\mathbb{N}^{\ast}$ we have
 \begin{equation} \label{alphaIDT}
  (X_{n^{1/\alpha}t}; t\geq 0) {\stackrel{(law)}{=}}(X_{t}^{(1)}+\cdots+X_{t}^{(n)}; t\geq 0)
 \end{equation}
where $(X^{(i)})_{1\leq i\leq n}$ are independent copies of X.
 
\end{defn}

\begin{rem}

 For $\alpha=1$, this definition is reduced to the well-known class of IDT processes (see R. Mansuy \cite{Mansuy}, 
 K. Es-Sebaiy $\&$ Y. Ouknine \cite{Ouknine}, and A. Hakassou $\&$ Y. Ouknine \cite{Ouknine2}). 
 
\end{rem}

Let us now, provide many examples as possible of $\alpha$-IDT processes which are not necessarily IDT processes.

\begin{exple}
 
 Let $0<\alpha\leq2$ and consider a strictly stable random variable $S_{\alpha}$ with parameter $\alpha$. 
 Then, the stochastic process X defined by:
 $$X_{t}=tS_{\alpha}, \, t\in\mathbb{R}_{+},$$
 is an $\alpha$-IDT process. 
 
\end{exple}

\begin{exple}
 
 Let $\alpha>0$ and consider a strictly 1-stable random variable S. Then, the stochastic process X defined by 
 $$X_{t}=t^{\alpha}S, \, t\in\mathbb{R}_{+},$$
 is an $\alpha$-IDT process.
 
\end{exple}

\begin{exple}
 
 If X is an $\alpha$-IDT process and $\mu$ a measure on $\mathbb{R}_{+}$ such that 
 $$X^{(\mu)}_{t}=\int \mu(du)X_{ut}, \,t\in\mathbb{R}_{+}$$
 is well defined, then $X^{(\mu)}$ is again an $\alpha$-IDT process.
 
\end{exple}

\section{Gaussian $\alpha$-IDT processes}

The next proposition emphasizes again how much the class $\alpha$-IDT processes is rich, since we shall prove that fractional 
Brownian motion of any order $H\in (0,1)$ are $\alpha$-IDT with $\alpha=2H$.

\begin{prop}
 
 Let $\alpha>0$ and $(G_{t};t\geq0)$ be a centered Gaussian process which is assumed to be continuous in probability. 
 Then the following properties are equivalent:
\begin{enumerate}
\item  $(G_{t}; t\geq 0)$ is an $\alpha$-IDT process.
\item  The covariance function $c(s,t):=\mathbb{E}[G_{s}G_{t}]$, $0 \leq s \leq t,$ satisfies
                   $$\forall  a> 0,  c(as, at)=a^{\alpha} c(s,t), \mbox{ for all   }  0 \leq s \leq t.$$
\item  The process $(G_{t}; t\geq 0)$ satisfies the scaling property of order $h=\frac{\alpha}{2}$, namely
          $$\forall a> 0, \mbox{  } (G_{at}; t \geq 0)\stackrel{(law)}{=}(a^{\alpha/2} G_{t}; t\geq 0).$$
\item  The process $(\tilde{G}_{y}:=e^{-\frac{\alpha}{2}y} G_{e^{y}}; y\in\mathbb{R})$ is stationary.
\item  The covariance function $\tilde{c}(y,z):=\mathbb{E}[\tilde{G}_{y} \tilde{G}_{z}],$  $y,z\in\mathbb{R},$ is of
the form $$\tilde{c}(y,z)=\int \mu (du) e^{iu \mid y-z \mid},  \mbox{ y,z}\in\mathbb{R}$$
where $\mu$ is a positive, finite, symmetric measure on $\mathbb{R}$.
\end{enumerate}
Then, under these equivalent conditions, the covariance function c of $(G_{t}; t\geq 0)$ 
is given by $$c(s,t)=(st)^{\alpha/2} \int \mu (da) e^{ia \mid ln(\frac{s}{t}) \mid}.$$

\end{prop}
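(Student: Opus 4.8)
The plan is to exploit throughout the fact that a centered Gaussian process is determined in law by its covariance function $c$, so that every distributional assertion about $(G_t)$ translates into an algebraic identity for $c$. I would first dispatch $(2)\Leftrightarrow(3)$ for free: the process $(G_{at})_{t\ge0}$ is centered Gaussian with covariance $(s,t)\mapsto c(as,at)$, while $(a^{\alpha/2}G_t)_{t\ge0}$ is centered Gaussian with covariance $(s,t)\mapsto a^{\alpha}c(s,t)$; since two centered Gaussian laws coincide precisely when their covariances do, the scaling identity of (3) is literally the homogeneity identity of (2). Next, $(3)\Rightarrow(1)$ is immediate upon specializing $a=n^{1/\alpha}$: the sum $G^{(1)}+\cdots+G^{(n)}$ of $n$ independent copies is centered Gaussian with covariance $n\,c(s,t)$, which is exactly the covariance of $a^{\alpha/2}G=n^{1/2}G$.

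The substantive direction is $(1)\Rightarrow(2)$. Comparing the covariances on the two sides of the $\alpha$-IDT identity yields $c(n^{1/\alpha}s,n^{1/\alpha}t)=n\,c(s,t)$ for every integer $n\ge1$. I would then upgrade this to all rational scalings: writing $a=(p/q)^{1/\alpha}$ and applying the integer relation twice — once with $p$ on the pair $(q^{-1/\alpha}s,q^{-1/\alpha}t)$ and once with $q$ to evaluate $c(q^{-1/\alpha}s,q^{-1/\alpha}t)=q^{-1}c(s,t)$ — gives $c(as,at)=a^{\alpha}c(s,t)$ whenever $a^{\alpha}\in\mathbb{Q}_+$. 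Since $\{a>0:a^{\alpha}\in\mathbb{Q}_+\}$ is dense in $(0,\infty)$ and $c$ is continuous, I can pass to the limit and obtain (2) for all $a>0$. The continuity of $c$ is where the hypothesis is used: continuity in probability of a centered Gaussian process forces $L^2$-continuity (for Gaussian variables, convergence in probability entails convergence of means and covariances), and hence $c(s,t_n)=\mathbb{E}[G_s G_{t_n}]\to\mathbb{E}[G_sG_t]$. I expect this density-plus-continuity passage to be the main obstacle, being the only place the continuity assumption is genuinely needed.

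To close the chain $(2)\Leftrightarrow(4)\Leftrightarrow(5)$, I would perform the logarithmic change of variable $t=e^{y}$. Then $\tilde G_y=e^{-\frac{\alpha}{2}y}G_{e^{y}}$ has covariance $\tilde c(y,z)=e^{-\frac{\alpha}{2}(y+z)}c(e^{y},e^{z})$; factoring $c(e^{y},e^{z})=(e^{y})^{\alpha}c(1,e^{z-y})$ by (2) shows that $\tilde c$ depends on $y-z$ alone, which is exactly the stationarity in (4), and the converse is the same computation run backwards, recovering (2). For $(4)\Leftrightarrow(5)$ I would invoke Bochner's theorem: the stationarity makes $\tilde c$ a continuous positive-definite function of $h=y-z$ (continuity of $\tilde c$ inherited from that of $c$), hence the Fourier transform $\int e^{iuh}\mu(du)$ of a positive finite measure $\mu$; reality and symmetry of the real covariance force $\mu$ to be symmetric, which permits replacing $e^{iu(y-z)}$ by $e^{iu|y-z|}$.

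Finally, the explicit formula drops out by substitution. Writing $G_t=t^{\alpha/2}\tilde G_{\ln t}$ and setting $y=\ln s$, $z=\ln t$ gives
$$c(s,t)=(st)^{\alpha/2}\,\tilde c(\ln s,\ln t)=(st)^{\alpha/2}\int \mu(da)\,e^{ia|\ln s-\ln t|}=(st)^{\alpha/2}\int \mu(da)\,e^{ia|\ln(s/t)|},$$
which is the asserted representation.
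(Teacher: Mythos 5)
Your proposal is correct and follows essentially the same route as the paper: reduce the $\alpha$-IDT property to the covariance homogeneity $c(n^{1/\alpha}s,n^{1/\alpha}t)=nc(s,t)$, extend from integers to rationals and then to all $a>0$ by density plus the $L^{2}$-continuity of $c$ inferred from continuity in probability, use that a centered Gaussian law is determined by its covariance, pass to the stationary process via the logarithmic (Lamperti) change of variable, and conclude with Bochner's theorem. Your write-up merely makes explicit what the paper leaves terse (the rational-scaling bootstrap, the covariance computation behind Lamperti's transformation, and the symmetry of $\mu$), so there is nothing substantively different to flag.
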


\begin{proof}

$(1 \Leftrightarrow 2)$ The $\alpha$-IDT property \eqref{alphaIDT} is equivalent to
\begin{displaymath}
\forall n\in\mathbb{N}, \hspace{1cm} c(n^{1/\alpha}s, n^{1/\alpha}t)=nc(s,t), \hspace{1cm} 0\leq s \leq t
\end{displaymath}
and also
\begin{displaymath}
\forall q\in\mathbb{Q}_{+}, \hspace{1cm} c(q^{1/\alpha}s,q^{1/\alpha}t)=qc(s,t), \hspace{1cm} 0\leq s \leq t
\end{displaymath}
The result is then obtained using the density of $\mathbb{Q}_{+}$ in $\mathbb{R}_{+}$ and the continuity of c (deduced by the continuity in probability, 
hence in $L^{2}$, of $(G_{t}, t\geq0)$).\\

$(2 \Leftrightarrow 3)$ Since the law of a centered Gaussian process is determined by its covariance function, the result follows immediately.\\

$(3 \Leftrightarrow 4)$ Apply Lamperti's transformation of order $h=\alpha/2.$ \\

$(4 \Leftrightarrow 5)$ Bochner's theorem for definite positive functions.\\

\end{proof}

\begin{exple}
 
 Consider $B_{H}=(B_{H}(t);t\geq0)$ the normalized fractional Brownian motion with Hurst index $H\in (0,1)$, that is a centered continuous Gaussian
 process such that:
 $$\mathbb{E}[B_{H}(t)B_{H}(s)]=\frac{1}{2}(|t|^{2H}+|s|^{2H}-|t-s|^{2H}), \, 0\leq s\leq t.$$
 Then $B_{H}=(B_{H}(t);t\geq0)$ is an $2H$-IDT Gaussian process. 
 
\end{exple}

\begin{exple}
 
 Let $\varphi\in\L^{2}(\mathbb{R}_{+};du)$; Then the process $G^{\varphi}$ defined by 
 $$\forall t>0, \, G^{\varphi}_{t}=\int_{0}^{\infty} \varphi (\frac{u}{t})dB_{H}(u), \, \mbox{ and } G^{\varphi}_{0}=0,$$
 where $B_{H}$ is the normalized fractional Brownian motion with Hurst index $H\in (0,1)$, is an $2H$-IDT Gaussian process.
 
\end{exple}

\section{$\alpha$-IDT and additive processes with the same marginals}

Let us first note that the laws of finite-dimensional marginals of an $\alpha$-IDT process are infinitely divisible. In particular, for any fixed t, the 
law of $X_{t}$ is infinitely divisible.

\begin{prop}\label{asso}
 
 Let $(X_{t};t\geq0)$ be a stochastically continuous $\alpha$-IDT process. Denote by $(L_{t};t\geq0)$ the unique Lévy process such that 
 $$X_{1}\stackrel{law}{=}L_{1}.$$
 Then $(X_{t};t\geq0)$ and the additive process $(L_{t^{\alpha}};t\geq0)$ have the same one-dimensional marginals, i.e. for any fixed $t\geq0$, 
 \begin{equation}\label{association}
  X_{t}\stackrel{law}{=}L_{t^{\alpha}}
 \end{equation}

\end{prop}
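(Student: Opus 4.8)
The plan is to work entirely at the level of one-dimensional characteristic functions. Write $\phi_{t}(\xi):=\mathbb{E}[e^{i\xi X_{t}}]$, and recall that the Lévy process $(L_{t})$ satisfies $\mathbb{E}[e^{i\xi L_{s}}]=e^{s\Psi(\xi)}$, where the characteristic exponent $\Psi(\xi)=\log\mathbb{E}[e^{i\xi L_{1}}]=\log\phi_{1}(\xi)$ is the distinguished continuous logarithm of the (infinitely divisible, hence zero-free) characteristic function of $X_{1}\stackrel{law}{=}L_{1}$, normalized by $\Psi(0)=0$. In these terms the asserted identity $X_{t}\stackrel{law}{=}L_{t^{\alpha}}$ becomes
$$\phi_{t}(\xi)=e^{t^{\alpha}\Psi(\xi)}, \qquad t\geq 0,\ \xi\in\mathbb{R},$$
so the entire proposition reduces to establishing this one scalar functional equation.

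First I would read off from the $\alpha$-IDT property \eqref{alphaIDT} its consequence for single marginals: projecting \eqref{alphaIDT} onto the $t$-coordinate gives, for every $n\in\mathbb{N}^{\ast}$ and every fixed $t$, the relation $\phi_{n^{1/\alpha}t}(\xi)=\phi_{t}(\xi)^{n}$. Putting $t=1$ yields $\phi_{n^{1/\alpha}}(\xi)=\phi_{1}(\xi)^{n}=e^{n\Psi(\xi)}$, which is exactly the claimed identity at the times $t=n^{1/\alpha}$, where $t^{\alpha}=n$. To reach rational exponents I would apply the same relation with $t=(1/n)^{1/\alpha}$: the multiplier $n$ gives $\phi_{1}(\xi)=\phi_{(1/n)^{1/\alpha}}(\xi)^{n}$, and the multiplier $m$ gives $\phi_{(m/n)^{1/\alpha}}(\xi)=\phi_{(1/n)^{1/\alpha}}(\xi)^{m}$; combining the two produces $\phi_{(m/n)^{1/\alpha}}(\xi)=e^{(m/n)\Psi(\xi)}$, that is, the identity at every time whose $\alpha$-th power is a positive rational.

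The delicate point — the one place needing care rather than algebra — is the $n$-th root implicit in passing from $\phi_{1}=\phi_{(1/n)^{1/\alpha}}^{\,n}$ to $\phi_{(1/n)^{1/\alpha}}=e^{\Psi/n}$, since complex $n$-th roots are not unique. Here I would invoke the fact recorded just before the statement, that all one-dimensional marginals of an $\alpha$-IDT process are infinitely divisible: this guarantees each $\phi_{s}$ is zero-free and carries a unique continuous logarithm $\Lambda_{s}$ with $\Lambda_{s}(0)=0$. Then both $n\Lambda_{(1/n)^{1/\alpha}}$ and $\Psi$ are continuous logarithms of $\phi_{1}$ vanishing at the origin, hence equal by uniqueness, which pins down the correct root and yields $\phi_{(1/n)^{1/\alpha}}=e^{\Psi/n}$ unambiguously. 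I expect this root-selection step to be the only genuine obstacle.

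Finally I would lift the rationality restriction by continuity. Stochastic continuity of $X$ forces $t\mapsto\phi_{t}(\xi)$ to be continuous for each fixed $\xi$ (convergence in probability implies convergence in law, hence pointwise convergence of characteristic functions), while $t\mapsto e^{t^{\alpha}\Psi(\xi)}$ is plainly continuous. These two continuous functions of $t$ agree on the set $\{t:\ t^{\alpha}\in\mathbb{Q}_{+}\}$, which is dense in $\mathbb{R}_{+}$, so they coincide for all $t\geq 0$. This is precisely $\phi_{t}=e^{t^{\alpha}\Psi}$, equivalently $X_{t}\stackrel{law}{=}L_{t^{\alpha}}$, completing the argument.
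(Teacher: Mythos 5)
Your proof is correct and takes essentially the same route as the paper's: both work with one-dimensional characteristic functions, establish $\mathbb{E}e^{i\theta X_{t^{1/\alpha}}}=(\mathbb{E}e^{i\theta X_{1}})^{t}$ first for integer $t$, then for rational $t$, and finally extend to all $t\geq 0$ by stochastic continuity and the density of $\mathbb{Q}_{+}$, identifying the right-hand side with $\mathbb{E}e^{i\theta L_{t}}$ via $X_{1}\stackrel{law}{=}L_{1}$. The only difference is that you spell out the distinguished-logarithm argument pinning down the correct $n$-th root (via infinite divisibility of the marginals), a point the paper's proof passes over silently when it writes $(\mathbb{E}e^{i\theta X_{1}})^{q}$ for rational $q$.
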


\begin{proof}
 
 From the definition of $\alpha$-IDT process, it easy to check that for any $n\in\mathbb{N}^{\ast}$, 
 $$\mathbb{E}e^{i\theta X_{n^{1/\alpha}}}=(\mathbb{E}e^{i\theta X_{1}})^{n} \mbox{ for all } \theta\in\mathbb{R}$$
 and also for any rational $q\in\mathbb{Q}_{+}$ that 
 $$\mathbb{E}e^{i\theta X_{q^{1/\alpha}}}=(\mathbb{E}e^{i\theta X_{1}})^{q} \mbox{ for all } \theta\in\mathbb{R}.$$
 Now, thanks to the stochastic continuity of X and the density of $\mathbb{Q}_{+}$ in $\mathbb{R}_{+}$, it follows that 
 for any $t\geq0$, 
 $$\mathbb{E}e^{i\theta X_{t^{1/\alpha}}}=(\mathbb{E}e^{i\theta X_{1}})^{t} \mbox{ for all } \theta\in\mathbb{R}.$$
 Since $X_{1}\stackrel{law}{=}L_{1}$ and thanks to the stochastic continuity of X and L, we get for any $t\geq0$,
 $$\mathbb{E}e^{i\theta X_{t^{1/\alpha}}}=\mathbb{E}e^{i\theta L_{t}} \mbox{ for all } \theta\in\mathbb{R}.$$
 The proof is completed.
 
 \end{proof}
 
 \begin{coro}
  
  If $(X_{t};t\geq0)$ is an $\alpha$-IDT process, stochastically continuous with independent increments, then it is a time-changed Lévy process with the 
  deterministic chronometer $h(t)=t^{\alpha}$.
  
 \end{coro}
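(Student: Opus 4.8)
The plan is to upgrade the equality of one-dimensional marginals furnished by Proposition~\ref{asso} to an equality of all finite-dimensional distributions, exploiting the extra assumption of independent increments. Write $\tilde{L}_{t}:=L_{t^{\alpha}}$ for $t\geq0$, where $L$ is the Lévy process attached to $X$ by Proposition~\ref{asso}. First I would observe that $\tilde{L}$ is itself an additive (stochastically continuous, independent-increment) process: stochastic continuity follows from that of $L$ together with the continuity of $t\mapsto t^{\alpha}$, and independence of increments is inherited from $L$ because $t\mapsto t^{\alpha}$ is nondecreasing, so disjoint time intervals for $\tilde{L}$ correspond to disjoint intervals for $L$. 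By hypothesis $X$ is also additive, and by Proposition~\ref{asso} the two processes share the same one-dimensional marginals, $X_{t}\stackrel{law}{=}\tilde{L}_{t}$ for every $t$.

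The core step is then the general principle that the law of an additive process is determined by its one-dimensional marginals. I would first record that $X_{0}=0$ almost surely: the $\alpha$-IDT identity at $t=0$ gives $X_{0}\stackrel{law}{=}X_{0}^{(1)}+\cdots+X_{0}^{(n)}$ for all $n$, forcing the characteristic function of $X_{0}$ to satisfy $\phi(\theta)=\phi(\theta)^{n}$, hence $X_{0}$ is degenerate, and the same identity pins its value at $0$; likewise $\tilde{L}_{0}=L_{0}=0$. Next, since every marginal of an $\alpha$-IDT process is infinitely divisible (as noted preceding Proposition~\ref{asso}), the characteristic functions $\theta\mapsto\mathbb{E}e^{i\theta X_{t}}$ never vanish. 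For $0\leq s\leq t$, independence of increments together with $X_{0}=0$ yields
\begin{displaymath}
\mathbb{E}e^{i\theta(X_{t}-X_{s})}=\frac{\mathbb{E}e^{i\theta X_{t}}}{\mathbb{E}e^{i\theta X_{s}}},
\end{displaymath}
so the law of each increment $X_{t}-X_{s}$ is recovered from the marginals alone; the identical computation applies to $\tilde{L}$.

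Finally I would assemble the finite-dimensional distributions. For any $0=t_{0}\leq t_{1}<\cdots<t_{k}$, the independent-increment structure factorizes the joint characteristic function of $(X_{t_{1}},\dots,X_{t_{k}})$ into a product of increment characteristic functions, each of which coincides with the corresponding factor for $\tilde{L}$ by the previous step; the same factorization holds for $(\tilde{L}_{t_{1}},\dots,\tilde{L}_{t_{k}})$. Hence $(X_{t};t\geq0)\stackrel{law}{=}(L_{t^{\alpha}};t\geq0)$, which is precisely the assertion that $X$ is a Lévy process time-changed by the deterministic chronometer $h(t)=t^{\alpha}$. The only delicate point is the non-vanishing of the marginal characteristic functions, needed to justify the division above; this is exactly where infinite divisibility of the $\alpha$-IDT marginals enters, and it is what makes the increment laws—and therefore the whole process law—reconstructible from the one-dimensional data.
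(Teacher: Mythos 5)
Your proof is correct and follows essentially the same route as the paper: combine Proposition~\ref{asso} with the fact that a stochastically continuous process with independent increments is determined in law by its one-dimensional marginal distributions. Where the paper simply cites Theorem~9.7 of Sato~\cite{Sato} for that fact, you prove it directly --- checking $X_{0}=0$ via the $\alpha$-IDT identity at $t=0$, using infinite divisibility of the marginals to ensure the characteristic functions never vanish so that increment laws are recovered by division, and factorizing the finite-dimensional distributions over increments --- and each of these steps is sound.
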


 \begin{proof}
 
 The proof is consequence of Proposition \ref{asso} and Theorem 9.7 in K. Sato \cite{Sato}.
 
 \end{proof}

\section{A link with path-valued Lévy processes}

From the definition of an $\alpha$-IDT process, it follows that viewed as a random variable taking values in path-space $D=D([0,\infty))$, i.e the 
classical Skorohod space of right-continuous paths over $[0,\infty)$, this random variable is infinitely divisible. We note that there exist D-valued
infinitely divisible variables which are $\alpha$-IDT. The main task of this section is to characterize D-valued infinitely divisible variable which are 
$\alpha$-IDT.

\begin{prop} \label{lemma}
 
 Let X be a D-valued infinitely divisible variable. X is an $\alpha$-IDT process if, and only if, 
 \begin{itemize}
  \item its Lévy measure M over D satisfies for any non-negative functional F on D,
      \begin{equation} \label{path1}
        \int_{D} M(dy)F(y(n\cdot))=n^{\alpha}\int_{D} M(dy)F(y(\cdot))
      \end{equation}
  \item its Gaussian measure $\rho$ over D satisfies for any $v,u\in\mathbb{R}$, 
	\begin{equation}\label{path2}
        \int_{D} y(nu)y(nv)\rho(dy)=n^{\alpha}\int_{D} y(u)y(v)\rho(dy)
        \end{equation}
 \end{itemize}
 
\end{prop}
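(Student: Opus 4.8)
The plan is to translate the distributional identity defining the $\alpha$-IDT property into an equality between two infinitely divisible laws on $D$, and then to read off the stated conditions by comparing the separate components of their Lévy--Khintchine representations. Introduce the time-scaling operator $T_c:D\to D$ given by $(T_c y)(t)=y(ct)$, which is a measurable linear bijection of the Skorohod space. Then the left-hand side of \eqref{alphaIDT} is the law of $T_{n^{1/\alpha}}X$, while its right-hand side is the $n$-fold convolution of the law of $X$, so that $X$ is $\alpha$-IDT if and only if
\[
T_{n^{1/\alpha}}X \stackrel{law}{=} X^{(1)}+\cdots+X^{(n)} \qquad \text{for every } n\in\mathbb{N}^{\ast}.
\]

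First I would invoke the Lévy--Khintchine representation of a $D$-valued infinitely divisible variable $X$: its characteristic functional, tested against a finitely supported $\xi=\sum_j\theta_j\delta_{t_j}$, is determined by a Gaussian measure $\rho$ and a Lévy measure $M$ on $D$ (together with a deterministic drift), and, crucially, these data are \emph{unique}. Since both sides of the displayed identity are infinitely divisible, the strategy is to compute the triplet of each side and match them by this uniqueness. On the right, convolving $n$ independent copies multiplies both the Gaussian covariance and the Lévy measure by $n$. On the left, the pushforward under the linear map $T_{n^{1/\alpha}}$ carries $\rho$ to $(T_{n^{1/\alpha}})_{*}\rho$ and $M$ to $(T_{n^{1/\alpha}})_{*}M$; concretely, for a non-negative functional $F$ one has $\int_D F\,d\big((T_{n^{1/\alpha}})_{*}M\big)=\int_D F\big(y(n^{1/\alpha}\cdot)\big)\,M(dy)$, and the Gaussian covariance becomes $\int_D y(n^{1/\alpha}u)\,y(n^{1/\alpha}v)\,\rho(dy)$.

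Equating the Gaussian parts and the Lévy parts then yields, for each $n$,
\[
\int_D y(n^{1/\alpha}u)\,y(n^{1/\alpha}v)\,\rho(dy)=n\!\int_D y(u)\,y(v)\,\rho(dy), \qquad \int_D F\big(y(n^{1/\alpha}\cdot)\big)\,M(dy)=n\!\int_D F(y)\,M(dy).
\]
Upon renaming the scaling factor $n^{1/\alpha}$ as $n$ (so that the multiplicative constant is its $\alpha$-th power $n^{\alpha}$), these are exactly \eqref{path2} and \eqref{path1}. The converse is then immediate: if these two identities hold, the characteristic functionals of $T_{n^{1/\alpha}}X$ and of the $n$-fold convolution agree term by term, hence the laws coincide and $X$ is $\alpha$-IDT.

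I expect the main obstacle to be the infinite-dimensional analysis rather than any computation: one must justify the Lévy--Khintchine representation and, above all, the uniqueness of the pair $(\rho,M)$ for infinitely divisible laws on the non-locally-compact path space $D$, and then verify that $T_c$ is well behaved enough that the pushforward of a Lévy measure is again a Lévy measure and that the Gaussian covariance transforms as claimed. A secondary subtlety is the drift term, since the identity also constrains the deterministic part; I would need to check that this constraint is automatically met or can be absorbed, so that no third condition beyond \eqref{path1} and \eqref{path2} is required.
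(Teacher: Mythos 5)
Your proposal is correct and takes essentially the same route as the paper: translate the $\alpha$-IDT identity \eqref{alphaIDT} into an equality of infinitely divisible laws on $D$ and match the Gaussian and L\'evy components of the L\'evy--Khintchine representation, which the paper does concretely by testing the Laplace functional against $f\in C_{c}(\mathbb{R}_{+},\mathbb{R}_{+})$ (a representation without a centering term, which is how it sidesteps the drift issue you rightly flag). One caveat: your ``renaming $n^{1/\alpha}$ as $n$'' is not a mere relabeling --- for irrational $\alpha$, passing from scaling by $n^{1/\alpha}$ with factor $n$ to scaling by $n$ with factor $n^{\alpha}$ requires extending the scaling identity to all positive reals via $\mathbb{Q}_{+}$-density and continuity, as the paper does in its other propositions --- but the paper's own proof makes exactly the same silent jump, writing $y(nt)$ with factor $n^{\alpha}$ directly, so this gloss is shared rather than a defect of your approach alone.
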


\begin{proof}
 
 The $\alpha$-IDT property \eqref{alphaIDT} admits the following equivalent formulation: for any $f\in C_{c}(\mathbb{R}_{+}, \mathbb{R}_{+})$, 
 \begin{equation*}
  \mathbb{E}[exp -\int_{0}^{\infty} dt f(t)X_{n^{1/\alpha}t}]=(\mathbb{E}[exp -\int_{0}^{\infty} dt f(t)X_{t}])^{n}.
 \end{equation*}
That is 
     \begin{equation*}
        \int_{D} M(dy)(1-e^{-\int_{0}^{\infty} dt f(t)y(nt)})=n^{\alpha}\int_{D} M(dy)(1-e^{-\int_{0}^{\infty} dt f(t)y(t)})
      \end{equation*}
 and 
\begin{equation*}
\int_{D} \int_{0}^{\infty} du f(u)y(nu)\int_{0}^{\infty} dt f(t)y(nt)\rho(dy)=
  n^{\alpha}\int_{D} \int_{0}^{\infty} du f(u)y(u)\int_{0}^{\infty} dt f(t)y(t)\rho(dy)
\end{equation*}
from which \eqref{path1} and \eqref{path2} follow easily.

\end{proof}

Let us now provide some constructions of $\alpha$-IDT processes.

\begin{prop}

 Let N be a Lévy measure on path-space D. Define M as follows 
 \begin{equation}
   \int_{D} M(dy)F(y(\cdot))=\int_{0}^{\infty}du \int_{D} N(dy)F(y(\frac{\cdot}{u^{1/\alpha}}))
 \end{equation}
Then M is an $\alpha$-IDT Lévy measure, i.e. the Lévy measure of an $\alpha$-IDT process viewed as a D-valued infinitely divisible random variable. 

\end{prop}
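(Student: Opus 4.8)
The plan is to appeal to Proposition \ref{lemma}: a $D$-valued infinitely divisible variable is $\alpha$-IDT precisely when its Lévy measure satisfies the scaling relation \eqref{path1} and its Gaussian measure satisfies \eqref{path2}. Since the object under construction carries only a jump part (the Gaussian measure being taken to be zero, so that \eqref{path2} holds vacuously), it suffices to verify that the measure $M$ defined in the statement satisfies \eqref{path1}, namely
$$\int_D M(dy)\, F(y(n\cdot)) = n^\alpha \int_D M(dy)\, F(y(\cdot))$$
for every non-negative functional $F$ on $D$ and every $n \in \mathbb{N}^{\ast}$.

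First I would substitute the defining formula for $M$ into the left-hand side, applying the definition with the functional $F(\,\cdot\,(n\cdot))$ in place of $F$. Composing the two successive time-dilations — the outer one $t \mapsto nt$ coming from the test in \eqref{path1} and the inner one $t \mapsto t/u^{1/\alpha}$ coming from the definition of $M$ — the path fed into $F$ becomes $t \mapsto y(nt/u^{1/\alpha})$. The interchange of the $du$-integral and the $N(dy)$-integral needed here is justified by Tonelli's theorem, using the non-negativity of $F$. This gives
$$\int_D M(dy)\, F(y(n\cdot)) = \int_0^\infty du \int_D N(dy)\, F\!\left(y\!\left(\tfrac{n\,\cdot}{u^{1/\alpha}}\right)\right).$$

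The key observation is that the composite dilation can be reabsorbed into the integration variable: since $n\,t/u^{1/\alpha} = t/(u/n^\alpha)^{1/\alpha}$, the change of variables $u = n^\alpha v$ (so that $du = n^\alpha\, dv$) transforms the argument $n\,\cdot/u^{1/\alpha}$ into $\cdot/v^{1/\alpha}$ while producing a Jacobian factor $n^\alpha$. Carrying out this substitution yields
$$\int_0^\infty du \int_D N(dy)\, F\!\left(y\!\left(\tfrac{n\,\cdot}{u^{1/\alpha}}\right)\right) = n^\alpha \int_0^\infty dv \int_D N(dy)\, F\!\left(y\!\left(\tfrac{\cdot}{v^{1/\alpha}}\right)\right),$$
and the right-hand side is exactly $n^\alpha \int_D M(dy)\, F(y(\cdot))$ by the definition of $M$ again. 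This establishes \eqref{path1}.

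There is little genuine difficulty in the computation itself; the only point requiring care is the bookkeeping of the two nested time-changes, and in particular the algebraic identity $n\,t/u^{1/\alpha} = t/(u/n^\alpha)^{1/\alpha}$ that makes the single substitution $u = n^\alpha v$ close the argument. A secondary point, which I would at least flag, is that one should also confirm $M$ is a bona fide Lévy measure on $D$ (the appropriate integrability of the defining integral), rather than merely a measure satisfying the scaling \eqref{path1}; this is presumably guaranteed by the hypotheses on $N$, but it is logically separate from the scaling identity that is the heart of the proposition.
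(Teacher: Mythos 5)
Your proof is correct and follows essentially the same route as the paper's: substitute the definition of $M$, perform the change of variables $u=n^{\alpha}v$ to absorb the dilation $n\cdot/u^{1/\alpha}$ into $\cdot/v^{1/\alpha}$ with Jacobian $n^{\alpha}$, and conclude via Proposition \ref{lemma}. Your added remarks (the vacuous Gaussian condition \eqref{path2}, Tonelli, and the check that $M$ is genuinely a Lévy measure) are sensible refinements the paper leaves implicit, but the core argument is identical.
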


\begin{proof}
 
Thanks to the trivial change of variable $u=n^{\alpha}v$, we get 
\begin{equation*}
\begin{split}
  \int_{D} M(dy)F(y(n\cdot))&=\int_{0}^{\infty}du \int_{D} N(dy)F(y(\frac{n\cdot}{u^{1/\alpha}}))\\
  &=n^{\alpha}\int_{0}^{\infty}dv \int_{D} N(dy)F(y(\frac{\cdot}{v^{1/\alpha}}))\\
  &=n^{\alpha}\int_{D} M(dy)F(y(\cdot))
\end{split}
\end{equation*}
And the reasult is now obvious by Proposition \ref{lemma}. 
 
\end{proof}

\begin{exple}

 Let X be a subordinator without drift, $\nu$ its Lévy measure, $\varphi$ a regular function and define 
 \begin{equation*}
  X_{t}^{(\varphi)}=\int_{0}^{\infty} du \varphi(u)X_{(ut)^{\alpha}}
 \end{equation*}
Then $(X^{(\varphi)}_{t};t\geq0)$ is an $\alpha$-IDT process and its $\alpha$-IDT Lévy measure satisfies for any functional over D,
\begin{equation*}
 \int_{D} M^{(\varphi)}(dy)F(y(\cdot))=\int_{0}^{\infty} du \int _{D} \nu(dx)F(x\Phi(\frac{u^{1/\alpha}}{\cdot}))
\end{equation*}
where $\Phi$ is the tail of the integral of $\varphi$: $\Phi(u)=\int_{u}^{\infty}dv\varphi(v)$.

\end{exple}

\section{Link with selfsimilarity and stability}

\begin{prop}

Let $(X_{t};t\geq0)$ be a non-trivial stochastically continuous $\alpha$-IDT process. Then $(X_{t};t\geq0)$ is strictly $\beta$-stable if and only if it is $(\alpha/\beta)$-selfsimilar. 

\end{prop}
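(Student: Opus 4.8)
The plan is to use the $\alpha$-IDT hypothesis to reduce both sides of the equivalence to one and the same family of dilation identities, and then to recognize selfsimilarity as the ``continuous'' version of that family while strict $\beta$-stability is its ``discrete'' version. Throughout, recall that strict $\beta$-stability at the level of processes means that for every $n\in\mathbb{N}^{\ast}$ the sum of independent copies satisfies $(X^{(1)}_{t}+\cdots+X^{(n)}_{t};t\geq0)\stackrel{(law)}{=}(n^{1/\beta}X_{t};t\geq0)$, while $(\alpha/\beta)$-selfsimilarity means $(X_{at};t\geq0)\stackrel{(law)}{=}(a^{\alpha/\beta}X_{t};t\geq0)$ for every $a>0$. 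Since $X$ is $\alpha$-IDT, \eqref{alphaIDT} holds, and substituting the sum $X^{(1)}_{t}+\cdots+X^{(n)}_{t}$ for $X_{n^{1/\alpha}t}$ shows that strict $\beta$-stability is \emph{equivalent} to the discrete scaling identity
$$(\star)\qquad (X_{n^{1/\alpha}t};t\geq0)\stackrel{(law)}{=}(n^{1/\beta}X_{t};t\geq0),\qquad n\in\mathbb{N}^{\ast}.$$
In this way the proposition is reduced to proving that $(\star)$ for all $n$ is equivalent to selfsimilarity for all $a>0$.

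One implication is immediate: if $X$ is $(\alpha/\beta)$-selfsimilar, I simply specialise the scaling relation to $a=n^{1/\alpha}$, for which $a^{\alpha/\beta}=n^{1/\beta}$, and obtain $(\star)$; hence $X$ is strictly $\beta$-stable. No limiting argument is needed in this direction.

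The substantive direction is the converse, and the difficulty is that $(\star)$ only furnishes the scaling relation along the discrete set of dilations $\{n^{1/\alpha}:n\in\mathbb{N}^{\ast}\}$, whereas selfsimilarity requires all $a>0$. I would extend in two stages. First, to rational dilations: the deterministic time substitution $t\mapsto t/m^{1/\alpha}$ turns $(\star)$ into $(X_{t/m^{1/\alpha}};t\geq0)\stackrel{(law)}{=}(m^{-1/\beta}X_{t};t\geq0)$, and then composing the two relations (using that equality in law is preserved by the deterministic time-dilation $X\mapsto(X_{at})$ and by multiplication by a constant) yields $(X_{(p/m)^{1/\alpha}t};t\geq0)\stackrel{(law)}{=}((p/m)^{1/\beta}X_{t};t\geq0)$ for all $p,m\in\mathbb{N}^{\ast}$. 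Thus the scaling relation holds for every dilation in $\{q^{1/\alpha}:q\in\mathbb{Q}_{+}^{\ast}\}$, a set that is dense in $(0,\infty)$ because $x\mapsto x^{1/\alpha}$ is a homeomorphism of $(0,\infty)$ onto itself.

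The main obstacle, and the final step, is to pass from this dense set of dilations to an arbitrary $a>0$; here I would use the standing stochastic-continuity hypothesis exactly as in the proof of Proposition~\ref{asso}. Fixing times $t_{1},\dots,t_{k}$ and choosing $q^{1/\alpha}\to a$ along the dense set, stochastic continuity gives $X_{q^{1/\alpha}t_{i}}\to X_{at_{i}}$ in probability, hence the left-hand finite-dimensional laws converge weakly to that of $(X_{at_{1}},\dots,X_{at_{k}})$; on the right-hand side $q^{1/\beta}X_{t_{i}}\to a^{\alpha/\beta}X_{t_{i}}$ almost surely. Since equality of laws passes to weak limits, the finite-dimensional identity holds at $a$, which is precisely $(\alpha/\beta)$-selfsimilarity. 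I expect the only delicate point to be this density-plus-continuity extension; the non-triviality assumption serves merely to ensure that the exponents $\alpha$ and $\beta$ are genuinely determined, so that the equivalence is not vacuous.
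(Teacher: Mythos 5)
Your proof is correct and follows essentially the same route as the paper: use the $\alpha$-IDT property \eqref{alphaIDT} to identify strict $\beta$-stability with the scaling identity along the dilations $n^{1/\alpha}$, obtain the easy direction by specialising selfsimilarity to $a=n^{1/\alpha}$, and obtain the converse by extending the identity to rational dilations and then to all $a>0$ via density and stochastic continuity. The only difference is that you spell out the rational-composition step and the passage to the limit of finite-dimensional laws, which the paper merely asserts.
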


\begin{proof}

First, assume that $(X_{t};t\geq0)$ is $(\alpha/\beta)$-selfsimilar. Then for all $n\in\mathbb{N}^{\ast}$ we have 
$$(X_{n^{1/\alpha}t};t\geq0)\stackrel{(law)}{=}(n^{1/\beta}X_{t};t\geq0)$$
and using the $\alpha$-IDT property, we obtain
$$
(n^{1/\beta}X_{t};t\geq0)\stackrel{(law)}{=}(X_{n^{1/\alpha}t};t\geq0)\stackrel{(law)}{=}
(\sum_{j=1}^{j=n}X^{(j)}_{t};t\geq0)
$$ 
where $X^{(1)},\cdots, X^{(n)}$ are independent copies of X. Thus X is strictly $\beta$-stable. \par
Conversely, suppose that $(X_{t};t\geq0)$ is strictly $\beta$-stable. Since $(X_{t};t\geq0)$ is $\alpha$-IDT, it follows that for any $n\in\mathbb{N}^{\ast}$
$$(X_{n^{1/\alpha}t};t\geq0)\stackrel{(law)}{=}(()n^{1/\alpha})^{\alpha/\beta}X_{t};t\geq0)$$
and also for any rational $q\in\mathbb{Q}_{+}$,
$$(X_{q^{1/\alpha}t};t\geq0)\stackrel{(law)}{=}((q^{1/\alpha})^{\alpha/\beta}X_{t};t\geq0).$$
Thanks the stochastic continuity, it follows that X is $(\alpha/\beta)$-selfsimilar. 

\end{proof}

\begin{prop}

A non-trivial strictly $\beta$-stable and $(\alpha/\beta)$-selfsimilar process $(X_{t};t\geq0)$ is an $\alpha$-IDT process.

\end{prop}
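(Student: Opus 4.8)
The plan is to establish the defining relation \eqref{alphaIDT} directly, by chaining two equalities in law: one furnished by selfsimilarity and one by strict stability. Fix $n\in\mathbb{N}^{\ast}$. What must be shown is
$$(X_{n^{1/\alpha}t};t\geq0)\stackrel{(law)}{=}(X_{t}^{(1)}+\cdots+X_{t}^{(n)};t\geq0),$$
where $X^{(1)},\dots,X^{(n)}$ are independent copies of $X$, all identities being read in the sense of finite-dimensional distributions.

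First I would invoke the $(\alpha/\beta)$-selfsimilarity of $X$ with scaling factor $a=n^{1/\alpha}$. By definition of $h$-selfsimilarity with $h=\alpha/\beta$ one has $(X_{at};t\geq0)\stackrel{(law)}{=}(a^{\alpha/\beta}X_{t};t\geq0)$, whence
$$(X_{n^{1/\alpha}t};t\geq0)\stackrel{(law)}{=}\big((n^{1/\alpha})^{\alpha/\beta}X_{t};t\geq0\big)=\big(n^{1/\beta}X_{t};t\geq0\big),$$
the last equality being the arithmetic identity $(n^{1/\alpha})^{\alpha/\beta}=n^{1/\beta}$, which is exactly why the exponent $\alpha/\beta$ is the correct one.

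Next I would use the strict $\beta$-stability of $X$. Recall that strict $\beta$-stability of the process means that, for independent copies $X^{(1)},\dots,X^{(n)}$, one has $(X_{t}^{(1)}+\cdots+X_{t}^{(n)};t\geq0)\stackrel{(law)}{=}(n^{1/\beta}X_{t};t\geq0)$, obtained by iterating the two-term stability relation. Combining this with the previous display gives
$$(X_{n^{1/\alpha}t};t\geq0)\stackrel{(law)}{=}(n^{1/\beta}X_{t};t\geq0)\stackrel{(law)}{=}(X_{t}^{(1)}+\cdots+X_{t}^{(n)};t\geq0),$$
which is precisely \eqref{alphaIDT}. As $n$ was arbitrary, $X$ is $\alpha$-IDT.

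I expect no genuine obstacle here; the argument is essentially the converse bookkeeping of the preceding proposition. The only points deserving care are that both scaling identities must be understood at the level of the finite-dimensional distributions of the whole process, rather than for single marginals, and that strict $\beta$-stability is being applied in its process-valued form, so that the $n$-fold sum of independent copies scales by exactly $n^{1/\beta}$ simultaneously in $t$. With these conventions fixed, the proof collapses to the two-step chain displayed above.
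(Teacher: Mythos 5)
Your proof is correct and follows essentially the same route as the paper's: apply $(\alpha/\beta)$-selfsimilarity with $a=n^{1/\alpha}$ to get $(X_{n^{1/\alpha}t};t\geq0)\stackrel{(law)}{=}(n^{1/\beta}X_{t};t\geq0)$, then identify the right-hand side with the sum of $n$ independent copies via strict $\beta$-stability. The only difference is cosmetic (the paper states the two equalities in the opposite order), and your added remarks about reading everything at the level of finite-dimensional distributions are sound.
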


\begin{proof}

Since X is strictly $\beta$-stable, we have for all $n\in\mathbb{N}^{\ast}$
$$(n^{1/\beta}X_{t};t\geq0)\stackrel{(law)}{=}(\sum_{j=1}^{j=n}X^{(j)}_{t};t\geq0)$$
where $X^{(1)}, \cdots, X^{(n)}$ are independent copies of X. On the other hand, it follows from the $(\alpha/\beta)$-selfsimilarity of X that 
$$(X_{n^{1/\alpha}t};t\geq0)\stackrel{(law)}{=}(n^{1/\beta}X_{t};t\geq0)$$
which implies that $(X_{t};t\geq0)$ is an $\alpha$-IDT process.

\end{proof}

\section{Subordination through $\alpha$-IDT process}

\begin{prop}

Let $(X_{t};t\geq0)$ be a Lévy process and $(\xi_{t};t\geq0)$ an $\alpha$-IDT chronometer such that X and $\xi$ are independent. Then, $(Z_{t}:=X_{\xi_{t}};t\geq0)$ is an $\alpha$-IDT process. 

\end{prop}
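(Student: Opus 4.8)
The plan is to exploit the $\alpha$-IDT property of the chronometer $\xi$ together with the independent stationary increments of the Lévy process $X$. To this end, introduce independent copies $(\xi^{(1)},\ldots,\xi^{(n)})$ of $\xi$ and independent copies $(X^{(1)},\ldots,X^{(n)})$ of $X$, all mutually independent, and set $Z^{(j)}_{t}:=X^{(j)}_{\xi^{(j)}_{t}}$; these are independent copies of $Z$. For each fixed $n\in\mathbb{N}^{\ast}$ the goal is the identity in law between $(Z_{n^{1/\alpha}t};t\geq0)$ and $(\sum_{j=1}^{n}Z^{(j)}_{t};t\geq0)$, which I would establish through a chain of three distributional equalities.

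First, since $\xi$ is $\alpha$-IDT, the time-change $(\xi_{n^{1/\alpha}t};t\geq0)$ has the same law as $(\sum_{j=1}^{n}\xi^{(j)}_{t};t\geq0)$. Second, because $X$ is independent of $\xi$ and of the family $(\xi^{(j)})$, applying the measurable composition map $(x,\theta)\mapsto(t\mapsto x_{\theta_{t}})$ to the pair $(X,\xi_{n^{1/\alpha}\cdot})$ and to $(X,\sum_{j}\xi^{(j)})$ — two time-changes of equal law driving the same independent $X$ — yields
$$(X_{\xi_{n^{1/\alpha}t}};t\geq0)\stackrel{(law)}{=}\Bigl(X_{\sum_{j=1}^{n}\xi^{(j)}_{t}};t\geq0\Bigr).$$

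The decisive third step is to show
$$\Bigl(X_{\sum_{j=1}^{n}\xi^{(j)}_{t}};t\geq0\Bigr)\stackrel{(law)}{=}\Bigl(\sum_{j=1}^{n}X^{(j)}_{\xi^{(j)}_{t}};t\geq0\Bigr),$$
and this is where the Lévy structure enters. I would prove it at the level of finite-dimensional marginals by conditioning on the whole family $(\xi^{(j)})$, so that all evaluation times become deterministic. For deterministic increasing times $0\leq s_{1}\leq\cdots\leq s_{m}$ with $s_{k}=\sum_{j}u^{(j)}_{k}$, writing $X_{s_{k}}$ as a sum of the independent increments $X_{s_{l}}-X_{s_{l-1}}$ shows that the multivariate characteristic function factorizes as $\exp(\sum_{l}(s_{l}-s_{l-1})\psi(\Theta_{l}))$, where $\psi$ is the characteristic exponent of $X$ and $\Theta_{l}=\sum_{k\geq l}\theta_{k}$. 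Since $s_{l}-s_{l-1}=\sum_{j}(u^{(j)}_{l}-u^{(j)}_{l-1})$, this factor splits over $j$ into exactly the characteristic functions of the independent processes $X^{(j)}$ evaluated at the times $u^{(j)}_{k}$, i.e. into the conditional characteristic function of the right-hand side; taking expectation over $(\xi^{(j)})$ removes the conditioning.

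Combining the three steps gives $(Z_{n^{1/\alpha}t};t\geq0)\stackrel{(law)}{=}(\sum_{j=1}^{n}Z^{(j)}_{t};t\geq0)$ for every $n\in\mathbb{N}^{\ast}$, which is precisely the $\alpha$-IDT property of $Z$. I expect the main obstacle to be the third step: one must condition correctly on the independent copies of the chronometer so that the increments of the single Lévy process $X$ over the disjoint blocks $[\,\sum_{i<j}\xi^{(i)}_{t},\sum_{i\leq j}\xi^{(i)}_{t}\,]$ determined by the partial sums become genuinely independent, which is exactly what lets the single-process characteristic function factor into the product corresponding to the $n$ independent subordinated copies.
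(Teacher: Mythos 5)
Your proposal is correct and takes essentially the same route as the paper: both arguments condition on the chronometer(s), invoke the $\alpha$-IDT property of $\xi$ to replace $\xi_{n^{1/\alpha}\cdot}$ by $\sum_{j=1}^{n}\xi^{(j)}$, and then factor the conditional characteristic function through the telescoping increment decomposition (your $\Theta_{l}=\sum_{k\geq l}\theta_{k}$ is exactly the paper's change of variables $\lambda_{k}=\theta_{k}+\cdots+\theta_{m}$), exploiting the independence and stationarity of the increments of $X$. Your explicit use of the characteristic exponent $\psi$, with $\mathbb{E}\exp(i\langle\theta,X_{t}\rangle)=e^{t\psi(\theta)}$ linear in $t$ so that the aggregate clock increments $\sum_{j}(u^{(j)}_{l}-u^{(j)}_{l-1})$ split over $j$, is merely a cleaner packaging of the paper's regrouping of the product over $k$ and $l$ into the $n$-th power.
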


\begin{proof}

Let $\xi^{(l)}$, $l=1,\cdots,n$, be independent copies of $\xi$. Since X is independent of $\xi$, then for every $m\geq1$ and
$\theta=(\theta_{1},\cdots,\theta_{m})\in\mathbb{R}^{m},$ we have
\begin{equation*}
\begin{split}
 J(n,\theta):&=\mathbb{E}exp\{i\sum_{k=1}^{m} <\theta_{k}, X(\xi_{n^{1/\alpha}t_{k}})>\}\\
&=\mathbb{E}[(\mathbb{E}exp \{i\sum_{k=1}^{m} <\theta_{k}, X(s_{k})>\})_{s_{k}=\xi_{n^{1/\alpha}t_{k}},\, k=1,\cdots,m}].
\end{split}
\end{equation*}
Using the $\alpha$-IDT property, we obtain
$$J(n,\theta)=\mathbb{E}\big[(\mathbb{E}exp \{i\sum_{k=1}^{m}
<\theta_{k}, X(s_{k})>\})_{s_{k}=\sum_{l=1}^{n}\xi^{(l)}_{t_{k}},\, k=1,\cdots,m}\big].$$
According to the change of variables $\lambda_{k}=\theta_{k} + \cdots + \theta_{m}$ and $t_{0}=0$,  we have
$$J(n,\theta)=\mathbb{E}[(\mathbb{E}exp \{i\sum_{k=1}^{m} <\lambda_{k}, X(s_{k})-X(s_{k-1})>\})_{s_{k}
=\sum_{l=1}^{n}\xi^{(l)}_{t_k},\, k=1,\cdots,m}].$$
By the independence of increments of X, we get
$$J(n,\theta)=\mathbb{E}[(\prod_{k=1}^{m} \mathbb{E}exp
\{i <\lambda_{k}, X(s_{k})-X(s_{k-1})>\})_{s_{k}=\sum_{l=1}^{n}\xi^{(l)}_{t_{k}}, \, k=1,\cdots,m}].$$
Now, it follows from the stationary of the increments of X and the independence of the $\xi^{(l)}$, $l=1,\cdots,n$, that
$$J(n,\theta)=\mathbb{E}[(\prod_{k=1}^{m}\prod_{l=1}^{n} \mathbb{E}exp
\{i <\lambda_{k}, X(r_{l,k})>\})_{r_{l,k}=\xi^{(l)}_{t_{k}}-\xi^{(l)}_{t_{k-1}},\,
k=1,\cdots,m;\, l=1,\cdots,n}].$$
And that is
$$J(n,\theta)=\mathbb{E}[(\prod_{l=1}^{n} \mathbb{E}exp \{i \sum_{k=1}^{m}
<\theta_{k}, X(r_{l,k})>\})_{r_{l,k}=\xi^{(l)}_{t_{k}},\, k=1,\cdots,m;\, l=1,\cdots,n}].$$
Now, this implies
$$J(n,\theta)=(\mathbb{E}[(\mathbb{E}exp \{i \sum_{k=1}^{m} <\theta_{k}, X(\xi_{r_{k}})>\})_{r_{k}
=\xi_{t_{k}},\, k=1,\cdots,m}])^{n}.$$
Hence
$$J(n,\theta)=(\mathbb{E}exp \{i \sum_{k=1}^{m} <\theta_{k}, X(\xi_{t_{k}})>\})^{n}.$$
The proof is achieved.

\end{proof}

\section{Links with temporal selfdecomposability and selfdecomposability}

\begin{prop}

A stochastically continuous $\alpha$-IDT process is temporally selfdecomposable of infinite order.

\end{prop}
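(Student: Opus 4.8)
The plan is to pass to the path-space picture of Proposition \ref{lemma} and to show that a deterministic time-dilation acts on an $\alpha$-IDT process simply by multiplying its Lévy and Gaussian characteristics by a scalar. Temporal selfdecomposability of infinite order will then follow because the residual of each such decomposition reproduces the $\alpha$-IDT structure exactly, allowing an induction on the order.

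First I would upgrade the scaling identities \eqref{path1} and \eqref{path2} from integers to arbitrary real dilation factors. Exactly as in the proof of Proposition \ref{asso}, the relations for $n\in\mathbb{N}^{\ast}$ extend first to $q\in\mathbb{Q}_{+}$ and then, by the stochastic continuity of $X$ together with the density of $\mathbb{Q}_{+}$ in $\mathbb{R}_{+}$, to every real $b>0$:
$$\int_{D} M(dy)\,F(y(b\cdot))=b^{\alpha}\int_{D} M(dy)\,F(y(\cdot)), \qquad \int_{D} y(bu)y(bv)\,\rho(dy)=b^{\alpha}\int_{D} y(u)y(v)\,\rho(dy).$$
Reading the left-hand sides as the characteristics of the time-dilated process, these identities say precisely that $(X_{bt};t\geq0)$, viewed as a $D$-valued infinitely divisible variable, has Lévy measure $b^{\alpha}M$ and Gaussian measure with covariance $b^{\alpha}$ times that of $\rho$.

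Next, fix $b>1$. Since $b^{\alpha}>1$, the quantities $(b^{\alpha}-1)M$ and $(b^{\alpha}-1)\rho$ are themselves a genuine Lévy measure and a genuine Gaussian covariance on $D$; let $X^{(b)}$ be the associated $D$-valued infinitely divisible variable, chosen independent of $X$. Because characteristics add under convolution of independent infinitely divisible laws, $X_{t}+X^{(b)}_{t}$ has Lévy measure $M+(b^{\alpha}-1)M=b^{\alpha}M$ and Gaussian part $b^{\alpha}\rho$, so that
$$(X_{bt};t\geq0)\stackrel{(law)}{=}(X_{t}+X^{(b)}_{t};t\geq0),$$
with $X^{(b)}$ independent of $X$. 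This is exactly temporal selfdecomposability (of order one).

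Finally, for the infinite order I would observe that the residual $X^{(b)}$ has characteristics $(b^{\alpha}-1)M$ and $(b^{\alpha}-1)\rho$, i.e. scalar multiples of those of $X$; hence they again satisfy \eqref{path1} and \eqref{path2}, and by Proposition \ref{lemma} the residual $X^{(b)}$ is itself an $\alpha$-IDT process. An induction on the order then closes the argument: if every $\alpha$-IDT process is temporally selfdecomposable of order $m-1$, then in the decomposition above each residual, being $\alpha$-IDT, is of order $m-1$, so $X$ is of order $m$; the base case $m=1$ is the display just obtained. Thus every $\alpha$-IDT process belongs to all classes of the hierarchy, i.e. it is temporally selfdecomposable of infinite order. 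The main obstacle I anticipate is the definitional bookkeeping: pinning down the exact notion of temporal selfdecomposability of infinite order through its nested residual classes and confirming that a scalar-multiple residual lands in each class, together with the routine but necessary checks that $(b^{\alpha}-1)M$ is a bona fide Lévy measure and that the real-$b$ extension of \eqref{path1}--\eqref{path2} is legitimate.
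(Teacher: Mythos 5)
Your proposal is correct, but it takes a genuinely different route from the paper's. The paper never leaves the level of finite-dimensional characteristic functions: it extends the $\alpha$-IDT identity to rational powers, writes for $b\in(0,1)$
\begin{equation*}
\mathbb{E}\,e^{i\sum_{k}\langle\theta_{k},X_{t_{k}}\rangle}
=\Big(\mathbb{E}\,e^{i\sum_{k}\langle\theta_{k},X_{b^{1/\alpha}t_{k}}\rangle}\Big)
\cdot\Big(\mathbb{E}\,e^{i\sum_{k}\langle\theta_{k},X_{b^{1/\alpha}t_{k}}\rangle}\Big)^{\frac{1}{b}-1},
\end{equation*}
and then reuses the $\alpha$-IDT property to recognize the second factor as the characteristic function of $\big(X_{(b/b')^{1/\alpha}t}\big)$ with $1/b'=1/b-1$; so the $c$-residual (with $c=b^{1/\alpha}$) is in law a deterministic time-dilation of $X$ itself, hence manifestly a stochastically continuous $\alpha$-IDT process, and infinite order follows by iterating. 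You instead go through the path-space characterization of Proposition \ref{lemma}, building the residual as the $D$-valued infinitely divisible variable with characteristics $(b^{\alpha}-1)(M,\rho)$ and observing that scalar multiples preserve \eqref{path1}--\eqref{path2}. The two residuals agree in law: your convolution power $\mu^{*(b^{\alpha}-1)}$, dilated back by $c=1/b$, is exactly the law of $\big(X_{(1-c^{\alpha})^{1/\alpha}t}\big)$, i.e.\ the paper's rescaled copy. What your route buys is the structural insight that every temporal residual of an $\alpha$-IDT process is a convolution power of the law of $X$, which makes the infinite-order hierarchy transparent; what it costs is a reliance on the L\'evy--Khintchine representation on $D$, which the paper states only loosely and without the drift component (your argument silently inherits that omission, though the drift also scales by $b^{\alpha}$ and $(b^{\alpha}-1)$ times a drift is again a drift, so nothing breaks). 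One point to phrase more carefully: the extension of \eqref{path1}--\eqref{path2} to real $b$ should not be obtained by passing to the limit in the $M$- and $\rho$-integrals directly (fdd convergence does not obviously give that); rather, as in Proposition \ref{asso}, stochastic continuity yields $\big(X_{bt};t\geq0\big)\stackrel{(law)}{=}\mu^{*b^{\alpha}}$ at the level of characteristic functionals for all real $b>0$, and one then reads off the characteristics by uniqueness of the representation. With that rephrasing, and your flagged checks that $(b^{\alpha}-1)M$ is a L\'evy measure and that the residual is stochastically continuous (its fdd characteristic functions are positive powers of those of $X$), your induction closes correctly; the paper's argument remains the more elementary and self-contained of the two.
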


\begin{proof}

For any $(\theta_{1},\cdots,\theta_{m})\in\mathbb{R}^{m}$, it follows thanks to the $\alpha$-IDT property that for any $n\in\mathbb{N}^{\ast}$,
\begin{equation*}
 \mathbb{E}exp\{i\sum_{k=1}^{m}<\theta_{k},X_{t_{k}}>\}=(\mathbb{E}exp\{i\sum_{k=1}^{m}<\theta_{k},X_{n^{1/\alpha}t_{k}}>\})^{1/n}
\end{equation*}
And for any $q\in\mathbb{Q}_{+}^{\ast}$,
\begin{equation*}
 \mathbb{E}exp\{i\sum_{k=1}^{m}<\theta_{k},X_{t_{k}}>\}=(\mathbb{E}exp\{i\sum_{k=1}^{m}<\theta_{k},X_{q^{1/\alpha}t_{k}}>\})^{1/q}
\end{equation*}
In particular for $b\in\mathbb{Q}_{+}^{\ast}\cup(0;1)$, we get 
\begin{equation*}
\begin{split}
 \mathbb{E}exp\{i\sum_{k=1}^{m}<\theta_{k},X_{t_{k}}>\}=(\mathbb{E}exp\{i\sum_{k=1}^{m}<\theta_{k},X_{b^{1/\alpha}t_{k}}>\})\\
 \times(\mathbb{E}exp\{i\sum_{k=1}^{m}<\theta_{k},X_{b^{1/\alpha}t_{k}}>\})^{\frac{1}{b}-1}
 \end{split}
\end{equation*}
That is 
\begin{equation*}
\begin{split}
\mathbb{E}exp\{i\sum_{k=1}^{m}<\theta_{k},X_{t_{k}}>\}=(\mathbb{E}exp\{i\sum_{k=1}^{m}<\theta_{k},X_{b^{1/\alpha}t_{k}}>\})\\
 \times(\mathbb{E}exp\{i\sum_{k=1}^{m}<\theta_{k},X_{(\frac{b}{b'})^{1/\alpha}t_{k}}>\}) \mbox{ with } \frac{1}{b'}=\frac{1}{b}-1.
 \end{split}
\end{equation*}
By stochastic continuity, it follows that for any $b\in(0;1)$, we get 
\begin{equation*}
\begin{split}
\mathbb{E}exp\{i\sum_{k=1}^{m}<\theta_{k},X_{t_{k}}>\}=(\mathbb{E}exp\{i\sum_{k=1}^{m}<\theta_{k},X_{b^{1/\alpha}t_{k}}>\})\\
 \times(\mathbb{E}exp\{i\sum_{k=1}^{m}<\theta_{k},X_{(\frac{b}{b'})^{1/\alpha}t_{k}}>\})
 \end{split}
\end{equation*}
Now we set $c=b^{1/\alpha}$ and $c'=(b')^{1/\alpha}$, and then we deduce that $(X_{t};t\geq0)$ is temporally selfdecomposable and the c-residual has finite 
dimensional marginals which fit with those of the process X suitably rescaled, the result follows.

\end{proof}

\begin{prop}

Let $(X_{t};t\geq0)$ be an $\alpha$-IDT process. If X is selfdecomposable, then its residual is also an $\alpha$-IDT 
process.

\end{prop}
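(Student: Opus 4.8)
The plan is to work at the level of finite-dimensional characteristic functionals and to realize the $c$-residual as a quotient. For fixed $m\geq1$, times $0\leq t_{1}\leq\cdots\leq t_{m}$ and $\theta=(\theta_{1},\cdots,\theta_{m})$, set
$$\Phi_{X}(\theta;t_{1},\cdots,t_{m}):=\mathbb{E}\exp\{i\sum_{k=1}^{m}<\theta_{k},X_{t_{k}}>\}.$$
Selfdecomposability of the process $X$ means that for every $c\in(0;1)$ there is a process $Y^{(c)}$ (the $c$-residual), independent of $X$, with $(X_{t};t\geq0)\stackrel{(law)}{=}(cX_{t}+Y^{(c)}_{t};t\geq0)$. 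In terms of characteristic functionals this reads $\Phi_{X}(\theta;\cdot)=\Phi_{X}(c\theta;\cdot)\,\Phi_{Y^{(c)}}(\theta;\cdot)$, so that
$$\Phi_{Y^{(c)}}(\theta;t_{1},\cdots,t_{m})=\frac{\Phi_{X}(\theta;t_{1},\cdots,t_{m})}{\Phi_{X}(c\theta;t_{1},\cdots,t_{m})}.$$
The quotient is well defined because, as already noted at the start of Section 3, every finite-dimensional marginal of $X$ is infinitely divisible, hence its characteristic functional never vanishes.

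Next I would recast the $\alpha$-IDT property of $X$ in the same notation. The property \eqref{alphaIDT} is exactly the assertion that replacing each $t_{k}$ by $n^{1/\alpha}t_{k}$ raises $\Phi_{X}$ to the power $n$:
$$\Phi_{X}(\theta;n^{1/\alpha}t_{1},\cdots,n^{1/\alpha}t_{m})=(\Phi_{X}(\theta;t_{1},\cdots,t_{m}))^{n},\qquad n\in\mathbb{N}^{\ast}.$$

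Finally I would combine the two. To show that $Y^{(c)}$ is $\alpha$-IDT it suffices to establish the analogous multiplicative relation for $\Phi_{Y^{(c)}}$. Evaluating the quotient at the scaled times $n^{1/\alpha}t_{k}$ and applying the displayed $\alpha$-IDT identity to numerator and denominator separately (the denominator carrying the argument $c\theta$, which is harmless) gives
$$\Phi_{Y^{(c)}}(\theta;n^{1/\alpha}t_{1},\cdots,n^{1/\alpha}t_{m})=\frac{(\Phi_{X}(\theta;\cdot))^{n}}{(\Phi_{X}(c\theta;\cdot))^{n}}=\left(\frac{\Phi_{X}(\theta;\cdot)}{\Phi_{X}(c\theta;\cdot)}\right)^{n}=(\Phi_{Y^{(c)}}(\theta;\cdot))^{n}.$$
This is precisely the $\alpha$-IDT property for the residual, and since $c\in(0;1)$ was arbitrary the claim follows.

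There is no serious analytic obstacle here: the decisive step is a one-line cancellation of $n$-th powers. The only points deserving care are formal. First, one must phrase selfdecomposability for the whole process (equivalently, compatibly across all finite-dimensional marginals) rather than for a single marginal, so that the residual $Y^{(c)}$ is itself a genuine process; the quotient formula above is simultaneous in all the times $t_{1},\cdots,t_{m}$, which is exactly what is needed. Second, one must justify that the quotient defining $\Phi_{Y^{(c)}}$ is legitimate, which is guaranteed by the non-vanishing of characteristic functionals of infinitely divisible laws. Note that stochastic continuity plays no role in this argument, since the identity is verified separately for each integer $n$.
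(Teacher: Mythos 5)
Your argument is essentially identical to the paper's proof: both realize the characteristic functional of the $c$-residual as the quotient $\Phi_{X}(\theta;\cdot)/\Phi_{X}(c\theta;\cdot)$ and then apply the $\alpha$-IDT power identity to numerator and denominator at the scaled times $n^{1/\alpha}t_{k}$. Your added remarks on the non-vanishing of the characteristic functionals (via infinite divisibility of the finite-dimensional marginals) and on the irrelevance of stochastic continuity are correct refinements of details the paper leaves implicit, but the route is the same.
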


\begin{proof}

By definition (see O. E. Barndorff-Nielsen \it{et al} \cite{K.Sato}), if X is selfdecomposable then for any $c\in (0;1)$ there exists a process 
$U^{(c)}=(U^{(c)}_{t};t\geq0)$ called the c-residual of X, such that 
\begin{equation*}
 X\stackrel{(law)}{=}cX'+U^{(c)}
\end{equation*}
where X' is an independent copie of X and X' and $U^{(c)}$ are independent. \\
Thus is follows that for any $(\theta_{1},\cdots,\theta_{m})$ we have
\begin{equation*}
 \mathbb{E}exp \{i\sum_{k=1}^{m}<\theta_{k},U^{(c)}_{t_{k}}>\}=\frac{\mathbb{E}exp \{i\sum_{k=1}^{m}<\theta_{k},X_{t_{k}}>\}}
 {\mathbb{E}exp \{i\sum_{k=1}^{m}<\theta_{k},cX'_{t_{k}}>\}}
\end{equation*}
Now, for any $n\in\mathbb{N}^{\ast}$, we have thanks to the $\alpha$-IDT property of X and X':
\begin{equation*}
 \mathbb{E}exp \{i\sum_{k=1}^{m}<\theta_{k},U^{(c)}_{n^{1/\alpha}t_{k}}>\}=\frac{(\mathbb{E}exp \{i\sum_{k=1}^{m}<\theta_{k},X_{t_{k}}>\})^{n}}
 {(\mathbb{E}exp \{i\sum_{k=1}^{m}<\theta_{k},cX'_{t_{k}}>\})^{n}}
\end{equation*}
And that is $U^{(c)}$ is an $\alpha$-IDT process.

\end{proof}

\end{document}